\documentclass[]{amsart}

\usepackage{amssymb}
\usepackage{amsfonts}
\usepackage{amsmath}
\usepackage{amsthm}
\usepackage{graphicx}
\usepackage{mathrsfs}
\usepackage{dsfont}
\usepackage{amscd}
\usepackage{multirow}
\usepackage[all]{xy}
\normalfont
\usepackage[T1]{fontenc}
\usepackage{calligra}
\usepackage{verbatim}
\usepackage[usenames,dvipsnames]{color}
\usepackage[colorlinks=true,linkcolor=Blue,citecolor=Violet]{hyperref}
\usepackage{enumerate}

\newcommand{\ninn}{n \in \N}
\newcommand{\ninnz}{n \in \N_0 }
\newcommand{\overn}{\overline{\N}}
\newcommand{\vhi}{{\varphi}}
\newcommand{\N}{{\mathds{N}}}

\newcommand{\R}{{\mathds{R}}}
\newcommand{\C}{{\mathds{C}}}
\newcommand{\cc}{{\mathcal{C}}}


\newcommand{\dom}[1]{{\operatorname*{dom}\left({#1}\right)}}

\theoremstyle{plain}
\newtheorem{theorem}{Theorem}[section]

\newtheorem{corollary}[theorem]{Corollary}

\newtheorem{lemma}[theorem]{Lemma}
\newtheorem{proposition}[theorem]{Proposition}

\newtheorem{theorem-definition}[theorem]{Theorem-Definition}

\theoremstyle{definition}
\newtheorem{definition}[theorem]{Definition}

\theoremstyle{remark}

\renewcommand{\geq}{\geqslant}
\renewcommand{\leq}{\leqslant}


\numberwithin{equation}{section}

\allowdisplaybreaks[4]

\hyphenation{Gro-mov}
\hyphenation{Haus-dorff}
\begin{document}

\title[Domains of quantum metrics on AF algebras]{Domains of quantum metrics on AF algebras}
\author{Konrad Aguilar}
\email{konrad.aguilar@pomona.edu}
\urladdr{https://aguilar.sites.pomona.edu}
\address{Department of Mathematics and Statistics, Pomona College, Claremont, CA 91711}
\thanks{The first author gratefully acknowledges the financial support  from the Independent Research Fund Denmark  through the project `Classical and Quantum Distances' (grant no.~9040-00107B)}
\thanks{The first author is supported by NSF grant DMS-2316892}

\author{Katrine von Bornemann Hjelmborg}
\email{kahje19@student.sdu.dk}
\urladdr{}
\address{Department of Mathematics and Computer Science (IMADA) \\ University of Southern Denmark \\  Campusvej 55, DK-5230 Odense M, Denmark}

\author{Fr\'ed\'eric Latr\'emoli\`ere}
\email{frederic.latremoliere@du.edu}
\address{Department of Mathematics, University of Denver, 2199 S. University Blvd, Denver
CO 80208}
\urladdr{http://www.math.du.edu/~frederic/}

\date{\today}
\subjclass[2000]{Primary:  46L89, 46L30, 58B34.}
\keywords{Noncommutative metric geometry, Gromov-Hausdorff convergence, Monge-Kantorovich distance, Quantum Metric Spaces, Lip-norms, Bunce-Deddens algebras, AT-algebras.}

\begin{abstract}
Given a compact quantum metric space (A, L),  we prove that the domain of  L  coincides with A if and only if A is finite dimensional.   We then show how one can explicitly build many   quantum metrics with distinct domains on infinite-dimensional AF algebras. In the last section, we provide a  strategy for calculating the distance between certain states in these quantum metrics, which allow us to calculate the distance between pure states in these quantum metrics on the quantized interval  and on the Cantor space.
\end{abstract}
\maketitle

\tableofcontents

\section{Introduction}

Quantum metric spaces were studied by Rieffel \cite{Rieffel98a, Rieffel99} as the basis for a framework to study various convergence results found, without formal basis, in the mathematical literature. Starting from Connes' original observation that a spectral triple gives rise to a metric on the state space of the underlying C*-algebra \cite{Connes89}, and the realization that this metric is a noncommutative version of the Monge-Kantorovich distance, Rieffel defined a quantum compact metric space in a manner which then enables the construction of analogues of the Gromov-Hausdorff distance, within the noncommutative realm \cite{Rieffel00}. In addition to the initial quantum Gromov-Hausdorff distance introduced by Rieffel, other analogues have since been introduced, with accompanying variation on the notion of a quantum compact metric space, where the underlying "topological" category varies from order-unit spaces \cite{Rieffel00} to C*-algebras \cite{Latremoliere13, Latremoliere13c}. There also exist other useful quantum distances in \cite{Li04, Kerr02,  wu06a}.  Later on, thanks to the functional analytic perspective on the Gromov-Hausdorff distance offered by these new constructions, the third author introduced other distances on a similar model between other structures, such as modules over C*-algebras \cite{Latremoliere16c}, dynamics over C*-algebras \cite{Latremoliere20}, and even spectral triples \cite{Latremoliere22a}.

While convergence of quantum metric spaces has been the focused on the theory of noncommutaitve metric geometry, there is an intrinsic interest in the structure of the noncommutative analogues of the Lipschitz algebras underlying quantum compact metric spaces. For instance in \cite{Latremoliere16b}, the third author proved that a *-morphism between $C^*$-algebras which preserves the domain of the quantum metric structure is automatically Lipschitz for the quantum metrics, in a natural way. It thus appears that the noncommutative Lipschitz algebras may contain some important geometric information by themselves. This paper is written in this spirit: we consider the domain of the quantum analogues of Lipschitz algebras as our focus. More specifically, our paper contains two new results on this structural question.

First, in Section \ref{s:non-equiv-af}, we prove that the domain of the analogue of a Lipschitz seminorm on a unital C*-algebra (see Definition \ref{d:Comquametspa} ) is in fact, the entire C*-algebra, if and only if the C*-algebra is finite dimensional. As a consequence here,  we see that when working with infinite-dimensional quantum compact metric spaces, the domain of these seminorms will be dense, but not the entire space, and thus we can expect to find Lipschitz seminorms with different domains. This is the second matter of this paper, where we study examples of quantum metrics with distinct domains in Section \ref{s:natural-numbers},   which generalizes a result from \cite{Aguilar-Latremoliere15} on the Cantor space.

\begin{definition}\label{d:Statespace}
The state space $S(A)$ of a unital C$^*$-algebra, $A$, is the set of positive linear functionals from $A$ to $\C$, which maps the unit of $A$ to 1. We will denote the norm of $A$ by $\|\cdot\|_A$and its unit by $1_A$.
\end{definition}

The following definition is due to  Rieffel \cite{Rieffel98a} but adapted to our setting.
\begin{definition}[\! {\cite{Rieffel98a}}]\label{d:Comquametspa}
A \textit{compact quantum metric space} $(A, L)$ is a pair of a unital C$^*$-algebra  $A$ and a seminorm $L$ defined on a norm-dense subspace $dom(L)$ of $sa(A)=\{a \in A : a=a^*\}$, where $sa(A)$ is the real subspace of self-adjoint elements of $A$, such that:
\begin{enumerate}
    \item $\{a\in dom(L) : L(a)=0\} = \R1_A $,
    \item the Monge-Kantorovich metric, defined for any two  $\vhi, \psi \in S(A)$ by:
    $$mk_L(\vhi,\psi) = \sup\{|\vhi(a)-\psi(a)| : a\in dom(L) , L(a) \leq 1\},$$
    metrizes the weak$^*$topology on $ S(A)$,
    \item $\{a \in dom(L) : L(a)\leq 1 \}$  is closed in  $\|\cdot \|_A.$
\end{enumerate}
The seminorm $L$ is called a {\em L-seminorm} on $A$.\\
\end{definition}

\begin{definition}[\! {\cite[Definition 1.5.9 and Theorem 1.5.10(Tomiyama)]{Brown-Ozawa}}]\label{d:cond-exp}
Let $A$ be a unital C*-algebra and let $B\subseteq A$ be a unital C*-subalgebra such that $1_A\in B$. A linear map $E : A \rightarrow B$ is a {\em conditional expectation} if $E(b)=b$ for all $b \in B$, $\|E(a)\|_A\leq \|a\|_A$ for all $a \in A$, and $E(bab')=bE(a)b'$ for all $a\in A, b,b'\in B$.

A conditional expectation is {\em faithful} if $E(a^*a)=0$ implies $a=0$.
\end{definition}

\begin{theorem}{{\cite[Theorem 3.5]{Aguilar-Latremoliere15}}}\label{t:af-lip}
Let $A=\overline{\cup_{n \in \N}A_n}^{\|\cdot\|_A}$ be a unital AF-algebra equipped with a faithful tracial state $\tau$ such that $A_0=\C1_A$ and $1_A\in A_n$ for every $n \in \N$. For each $n \in \N$, let 
\[
E_n:A \rightarrow A_n
\]
denote the unique $\tau$-preserving conditional expectation onto $A_n$ (that is, $\tau \circ E_n=\tau$ for all $\ninnz$). Let $(\beta(n))_{n \in \N}$ be a sequence in $(0,\infty)$ that converges to $0$.   

If we define
\[
L_\beta (a)=\sup \left\{ \frac{\|a-E_n(a)\|_A}{\beta(n)} : n \in \N \right\}
\]
for all $a \in A$, then $(A, L)$ and    $(A_n, L)$ for all $n \in \N$ are     compact quantum metric spaces such that 
\[
\Lambda ((A_n, L), (A, L)) \leq \beta(n)
\]
for all $n \in \N$, and thus
\[
\lim_{n \to \infty} \Lambda ((A_n, L), (A, L))=0,
\]
where $\Lambda$ is the quantum Gromov-Hausdorff propinquity of \cite{Latremoliere13}.
\end{theorem}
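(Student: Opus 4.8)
The plan is to verify the three axioms of Definition~\ref{d:Comquametspa} for $(A,L)$ and for each $(A_n,L)$ (writing $L=L_\beta$), and then to produce a single bridge that yields the propinquity estimate. The two facts I would use repeatedly are the martingale identity $E_m\circ E_n=E_{\min(m,n)}$---valid because the $\tau$-preserving conditional expectations onto the nested algebras are compatible---together with the contractivity $\|E_n(c)\|_A\le\|c\|_A$ and positivity (so that $E_n$ preserves self-adjointness). I also record that, since $A_0=\C1_A$, the coarsest expectation is $E_0(a)=\tau(a)1_A$, and that this base term of the supremum defining $L$ is what will control scalars.

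First I would note that $L$ is finite on the dense subspace $\sa{\bigcup_n A_n}$: if $a\in A_m$ then $E_n(a)=a$ for $n\ge m$, so the defining supremum collapses to a finite maximum, and hence $\mathrm{dom}(L)$ is norm-dense. For axiom (1), $L(a)=0$ forces $a=E_n(a)$ for every $n$, in particular $a=E_0(a)=\tau(a)1_A\in\R1_A$, while conversely $L$ vanishes on $\R1_A$ because $E_n(1_A)=1_A$. For axiom (3), each map $a\mapsto\|a-E_n(a)\|_A/\beta(n)$ is norm-continuous (as $E_n$ is bounded), so $L$ is a supremum of continuous functions, hence lower semicontinuous, and $\{a:L(a)\le1\}$ is norm-closed.

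The substance of axiom (2) is Rieffel's criterion \cite{Rieffel99}: it suffices that the image of $\{a\in\sa{A}:L(a)\le1\}$ in $\sa{A}/\R1_A$ be totally bounded. Given $\varepsilon>0$, I would use $\beta(n)\to0$ to pick $N$ with $\beta(N)<\varepsilon/2$; then any Lip-ball element $a$ satisfies $\|a-E_N(a)\|_A\le\beta(N)L(a)<\varepsilon/2$, so $a$ lies within $\varepsilon/2$ of $E_N(a)\in A_N$. The base term gives $\|a-\tau(a)1_A\|_A\le\beta(0)$, so modulo $\R1_A$ the truncations $E_N(a)$ fill only a bounded subset of the finite-dimensional space $A_N$; a finite $\varepsilon/2$-net there produces a finite $\varepsilon$-net for the Lip-ball. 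Thus $mk_L$ metrizes the weak$^*$ topology and $(A,L)$ is a compact quantum metric space. The same check settles each $(A_n,L)$---total boundedness is automatic in finite dimensions, and the null space is again exactly $\R1_A$ thanks to the $E_0$ term.

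For the estimate I would take the bridge $\gamma_n=(A,1_A,\mathrm{id}_A,\iota_n)$ with $\iota_n:A_n\hookrightarrow A$ the inclusion and pivot the unit, so admissibility is automatic \cite{Latremoliere13}. With pivot $1_A$ the height is $0$: the induced state maps are $\varphi\mapsto\varphi$ and $\varphi\mapsto\varphi|_{A_n}$, with ranges exactly $S(A)$ and $S(A_n)$. The reach is the $\|\cdot\|_A$-Hausdorff distance between the two Lip-balls inside $A$; one inclusion is free since $A_n\subseteq A$, and for the other I send $a$ with $L(a)\le1$ to $E_n(a)$, using $\|a-E_n(a)\|_A\le\beta(n)$. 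The crux---and the step I expect to demand the most care---is the $L$-contractivity $L(E_n(a))\le L(a)$: from $E_m\circ E_n=E_{\min(m,n)}$ one gets, for $m<n$, that $E_n(a)-E_m(a)=E_n(a-E_m(a))$, whence $\|E_n(a)-E_m(a)\|_A\le\|a-E_m(a)\|_A$, while the terms with $m\ge n$ vanish; taking the supremum yields $L(E_n(a))\le L(a)$. Hence the reach is at most $\beta(n)$, the bridge length is at most $\beta(n)$, and $\Lambda((A_n,L),(A,L))\le\beta(n)$; the limit follows from $\beta(n)\to0$. This $L$-contractivity of the expectations is the real linchpin, as it is what both returns the finite-dimensional truncations to the Lip-ball for the reach and underlies the total-boundedness estimate above.
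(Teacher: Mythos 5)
This paper does not actually prove Theorem \ref{t:af-lip}; it imports it from \cite[Theorem 3.5]{Aguilar-Latremoliere15}. Your reconstruction follows essentially the same route as that source: finiteness of $L_\beta$ on the dense subspace $\bigcup_n sa(A_n)$, the kernel computation via $E_0(a)=\tau(a)1_A$, lower semicontinuity of a supremum of continuous maps for closedness of the unit ball, Rieffel's total-boundedness criterion applied to the truncations $E_N(a)$ (which land in a norm-bounded subset of the finite-dimensional $sa(A_N)$ modulo $\R 1_A$ because of the $n=0$ term), and a single identity-pivot bridge $(A,1_A,\mathrm{id}_A,\iota_n)$ whose height vanishes and whose reach is controlled by $\|a-E_n(a)\|_A\le\beta(n)$ together with the contraction $L_\beta(E_n(a))\le L_\beta(a)$ deduced from $E_m\circ E_n=E_{\min(m,n)}$. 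All of these steps are correct, granting---as you do, and as the source intends---that the index $n=0$ is included in the supremum defining $L_\beta$; without it the kernel would be $sa(A_1)$ rather than $\R 1_A$ and axiom (1) would fail.

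The one genuine omission is the (quasi-)Leibniz property of $L_\beta$. The propinquity $\Lambda$ of \cite{Latremoliere13} is defined only on a class of quantum compact metric spaces whose L-seminorms satisfy a Leibniz-type inequality for the Jordan and Lie products, and the bridge mechanism produces an upper bound for $\Lambda$ only within that class; Definition \ref{d:Comquametspa} of the present paper drops that axiom, but the conclusion $\Lambda((A_n,L),(A,L))\le\beta(n)$ silently requires it. In \cite{Aguilar-Latremoliere15} this is where a further estimate enters: one bounds $\|ab-E_n(ab)\|_A$ by $\|a\|_A\|b-E_n(b)\|_A+\|a-E_n(a)\|_A\|b\|_A$ plus a term $\|E_n(ab)-E_n(a)E_n(b)\|_A$ controlled by the same quantities, showing that $L_\beta$ is $(2,0)$-quasi-Leibniz, and the convergence then takes place in the corresponding quasi-Leibniz propinquity. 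Your argument needs this supplement before the final displayed inequalities are licensed; everything else stands.
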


 \section{Domain and dimension}\label{s:non-equiv-af}
The following two results motivates our inquiry into domains of Lip-norms on AF-algebras. Indeed, in Corollary \ref{c:dom}, we show that the domain of an $L$-seminorm is equal to the the self-adjoint elements of the C*-algebra if and only if the C*-algebra is finite-dimensional.

\begin{theorem}\label{fd-qcms-thm}
  Let $A$ be a Banach space, and let $L$ be a seminorm defined on a dense subspace $\dom{L}$ of $A$ such that $K:= \{a\in\dom{L}:\L(a)=0\}$ is finite dimensional, and the image of $\{a\in\dom{L}:L(a)\leq 1\}$ in $A/K$ is compact in $A$.

  The space $A$ is finite dimensional if, and only if, $\dom{L} = A$.
\end{theorem}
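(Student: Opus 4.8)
The plan is to prove the easy implication directly and reserve the real work for the converse. If $A$ is finite dimensional, then every linear subspace of $A$ is norm-closed, so the dense subspace $\dom{L}$ must in fact equal $A$. The substance of the statement is therefore the implication $\dom{L}=A \Rightarrow \dim A < \infty$, and the engine will be the compactness of the image of the $L$-unit ball in $A/K$ combined with the Baire category theorem and the Riesz lemma.

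Assume $\dom{L}=A$, and set $\mathcal{B}=\{a\in A:L(a)\le 1\}$, which is a convex and symmetric subset of $A$ since $L$ is a seminorm. Because $\dom{L}=A$, every $a\in A$ satisfies $L(a)<\infty$, so $A=\bigcup_{n\in\N} n\mathcal{B}$. Applying the Baire category theorem to the complete space $A$ and the countable cover by the closed sets $\overline{n\mathcal{B}}=n\overline{\mathcal{B}}$, some $n\overline{\mathcal{B}}$, and hence $\overline{\mathcal{B}}$ itself, has nonempty interior. First I would then exploit that $\overline{\mathcal{B}}$ is convex and symmetric: a convex symmetric set possessing an interior point $x_0$ also contains $-x_0$, and by taking midpoints it contains a norm ball $\{a\in A:\norm{a}{A}<r\}$ about the origin for some $r>0$.

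Next I would push this information down to the quotient. Since $K$ is finite dimensional it is closed, so $A/K$ is a Banach space and the quotient map $q\colon A\to A/K$ is continuous and open. Continuity gives $q(\overline{\mathcal{B}})\subseteq\overline{q(\mathcal{B})}=q(\mathcal{B})$, the last equality holding because $q(\mathcal{B})$ is compact, hence closed, by hypothesis. Combining this with the previous step, the image $q(\{a:\norm{a}{A}<r\})$, which is exactly the open $r$-ball of $A/K$ by openness of $q$, is contained in the compact set $q(\mathcal{B})$; its closure, the closed $r$-ball of $A/K$, is then a closed subset of a compact set and is therefore compact. By the Riesz lemma a normed space with a compact ball is finite dimensional, so $A/K$ is finite dimensional, and since $K$ is finite dimensional as well, $A$ is finite dimensional.

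I expect the main obstacle to be locating a genuine norm ball inside $\overline{\mathcal{B}}$ and matching it to the compactness assumption, which is stated in the quotient rather than in $A$: the argument must interleave the Baire/convexity step performed in $A$ with the transfer to $A/K$, and one must be careful that it is the \emph{closure} of the $L$-ball that carries the interior point, since the $L$-ball itself is not assumed closed in $A$ here. Once the closed ball of $A/K$ is shown to be compact, Riesz's theorem closes the argument at once.
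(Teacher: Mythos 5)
Your proof is correct, and it shares the paper's skeleton (Baire category plus the Riesz characterization of finite dimensionality), but the two arguments are organized differently in a way worth noting. The paper runs the Baire argument entirely downstairs in $B=A/K$: the images $B_n$ of the sets $\{a : L(a)\leq n\}$ are compact by hypothesis, hence already closed, so no closures need to be taken, and once some $B_n$ has interior one concludes that the unit ball of $B$ is totally bounded and hence compact by completeness. You instead run Baire upstairs in $A$ with the closed sets $n\overline{\mathcal{B}}$, which forces you to invoke the convexity and symmetry of the seminorm ball (the standard open-mapping-theorem trick) to move the interior point to the origin, and then to transfer the resulting norm ball to the quotient via $q(\overline{\mathcal{B}})\subseteq\overline{q(\mathcal{B})}=q(\mathcal{B})$. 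Your route costs you the extra convexity/symmetry step and the careful closure bookkeeping you yourself flag, but it buys a slightly more self-contained endgame: you exhibit the closed ball of $A/K$ directly inside a compact set rather than passing through total boundedness. Both proofs of the easy direction are identical. One small point of care in your write-up: the containment $\overline{q(\mathcal{B})}=q(\mathcal{B})$ does depend on reading the hypothesis as compactness in $A/K$ (the paper's phrase ``compact in $A$'' is evidently a typo), which is also how the paper's own proof reads it.
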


\begin{proof}
  If $A$ is finite dimensional, then the domain $\dom{L}$ of $L$ is also finite dimensional, hence complete, hence closed. Since it is dense in $A$, we conclude that $\dom{L}=A$.

  Conversely, assume that $\dom{L}=A$. Let $B :=A/K$, endowed with the quotient norm form $A$. Let $B_r$ be the image by the canonical surjection of $\{a \in \dom{L} : L(a)\leq r\}$ in $B$, for all $r > 0$. Since $\dom{L} = A$, we observe that $B = \bigcup_{n\in\N} B_n$. By assumption on $L$, the set $B_n$ is compact, hence closed in $B$ for all $\ninn$ (since $B$ is Hausdorff).

  Since $B$ is complete, by the Baire Category Theorem, there exists $\ninn$ such that $B_n$ has nonempty interior. Let $U\subseteq B_n$ be a nonempty open ball. Since $B_n$ is compact, $U$ is totally bounded. Therefore, the open unit ball of $B$ is totally bounded; as $B$ is complete, the closed unit ball of $B$ is thus compact. Consequently, $B$ is finite dimensional. Therefore, $A$ is also finite dimensional as claimed, since $A/K = B$ and $K$ are both finite dimensional.
\end{proof}

\begin{corollary}\label{c:dom}
  If $(A,L)$ is a compact quantum metric space, then $A$ is finite dimensional if, and only if, $\dom{L}=sa(A)$.
\end{corollary}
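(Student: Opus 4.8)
The plan is to deduce the corollary directly from Theorem \ref{fd-qcms-thm}, applied to the real Banach space $sa(A)$ equipped with the seminorm $L$. To invoke that theorem I must verify its two hypotheses. First, by axiom (1) of Definition \ref{d:Comquametspa}, the kernel $K=\{a\in\dom{L}:L(a)=0\}$ equals $\R 1_A$, which is one-dimensional, hence finite dimensional; in particular $\R 1_A\subseteq\dom{L}$, so $\dom{L}$ contains all scalar multiples of the unit. Second, I must show that the image of the Lipschitz ball $\{a\in\dom{L}:L(a)\leq 1\}$ under the canonical surjection $\pi\colon sa(A)\to sa(A)/\R 1_A$ is compact. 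Granting these, Theorem \ref{fd-qcms-thm} yields that $sa(A)$ is finite dimensional if and only if $\dom{L}=sa(A)$; and since $A$ is finite dimensional exactly when $sa(A)$ is (as $A=sa(A)\oplus i\,sa(A)$ as a real vector space), the corollary follows.

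The compactness of $B_1 := \pi\left(\{a\in\dom{L}:L(a)\leq 1\}\right)$ splits into total boundedness and closedness in the complete quotient space $sa(A)/\R 1_A$. Total boundedness is precisely Rieffel's characterization of Lip-norms: axiom (2), asserting that $mk_L$ metrizes the weak$^*$ topology on $S(A)$, is equivalent to the total boundedness of $B_1$ in $sa(A)/\R 1_A$ \cite{Rieffel98a, Rieffel99}. Since a closed, totally bounded subset of a complete metric space is compact, it remains only to prove that $B_1$ is closed, and this is the step I expect to require the most care.

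Here the self-adjoint structure does the real work. Fix a state $\mu\in S(A)$, and for any $a\in\dom{L}$ with $L(a)\leq 1$ replace it by $\tilde a := a-\mu(a)1_A$: this alters neither $L(a)$ (as $L$ vanishes on $\R 1_A$) nor $\pi(a)$, and it normalizes $\mu(\tilde a)=0$. The crucial observation is that, for a self-adjoint $b$ with $\mu(b)=0$, the quotient norm controls the ambient norm. Writing $m=\min\sigma(b)$ and $M=\max\sigma(b)$, the condition $\mu(b)\in[m,M]$ forces $m\leq 0\leq M$, so that $\|b\|_A=\max(-m,M)\leq M-m=2\inf_{t\in\R}\|b-t1_A\|_A=2\|\pi(b)\|$. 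Applying this to the differences $\tilde a_n-\tilde a_m$ of normalized representatives of a sequence in $B_1$ whose images converge in the quotient, I conclude that $(\tilde a_n)$ is norm-Cauchy, hence converges to some $a\in sa(A)$. Axiom (3) of Definition \ref{d:Comquametspa}, the norm-closedness of $\{a\in\dom{L}:L(a)\leq 1\}$, then forces $a\in\dom{L}$ with $L(a)\leq 1$, so the limit of $\pi(\tilde a_n)$ lies in $B_1$. This establishes that $B_1$ is closed, hence compact, completing the verification of the hypotheses of Theorem \ref{fd-qcms-thm}.
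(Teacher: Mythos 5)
Your proposal is correct and follows essentially the same route as the paper: both deduce the corollary by applying Theorem \ref{fd-qcms-thm} to $sa(A)$ and noting that $A$ is finite dimensional exactly when $sa(A)$ is. The only difference is that the paper treats the hypotheses of Theorem \ref{fd-qcms-thm} as implicitly satisfied by the axioms of a compact quantum metric space, whereas you verify them explicitly --- your check that the image of the Lip-ball is totally bounded (Rieffel's characterization) and closed (via the inequality $\|b\|_A\leq 2\|\pi(b)\|$ for self-adjoint $b$ with $\mu(b)=0$, together with axiom (3)) is accurate and fills in detail the paper omits.
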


\begin{proof}
  By Theorem  \ref{fd-qcms-thm}, we conclude that $sa(A) = \dom{L}$ if, and only if, $sa(A)$ is finite dimensional, which in turn is equivalent to $A$ being finite dimensional.
\end{proof}

Therefore, not only do domains of Lip-norms on infinite-dimensional spaces contain the whole space, but  this also tells us that it is possible to obtain Lip-norms with different domains. In the next several results, we see an explicit way to do this for infinite-dimensional AF algebras.

\begin{lemma}\label{l:af-domain}
    Let $A=\overline{\cup_{n \in \N}A_n}^{\|\cdot\|_A}$ be a unital AF-algebra equipped with a faithful tracial state $\tau$ such that $A_0=\C1_A$. For each $n \in \N$, let 
\[
E_n:A \rightarrow A_n
\]
denote the unique $\tau$-preserving conditional expectation onto $A_n$ (that is, $\tau \circ E_n=\tau$ for all $\ninnz$). 

If $a \in A$, then $(\|a-E_n(a)\|_A)_{\ninnz}$ converges to $0.$
    \end{lemma}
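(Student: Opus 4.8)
The plan is to exploit two features of the conditional expectations $E_n$ recorded in Definition \ref{d:cond-exp}: each is a contraction for $\|\cdot\|_A$, and each restricts to the identity on its range $A_n$. Combined with the hypothesis that the filtration $(A_n)_n$ is increasing with dense union, this reduces the statement to a routine approximation argument, so I do not expect any genuine obstacle.

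First I would fix $a \in A$ and $\varepsilon > 0$. Since $\bigcup_{n \in \N} A_n$ is dense in $A$ by hypothesis, I can choose an index $N \in \N$ and an element $b \in A_N$ with $\|a - b\|_A < \varepsilon$. The point of selecting $b$ at a single finite stage $A_N$ is that, by the nestedness $A_N \subseteq A_n$ for $n \geq N$, the element $b$ then lies in $A_n$ for every such $n$, and is therefore fixed by all the corresponding conditional expectations.

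Next, for every $n \geq N$ the defining property $E_n(c) = c$ for $c \in A_n$ gives $E_n(b) = b$. Using linearity of $E_n$ together with the contractivity $\|E_n(x)\|_A \leq \|x\|_A$ from Definition \ref{d:cond-exp}, I estimate, for all $n \geq N$,
\[
\|a - E_n(a)\|_A \leq \|a - b\|_A + \|b - E_n(a)\|_A = \|a - b\|_A + \|E_n(b - a)\|_A \leq 2\|a - b\|_A < 2\varepsilon .
\]
Since $\varepsilon > 0$ was arbitrary and the tail bound holds for all $n \geq N$, this shows that the sequence $(\|a - E_n(a)\|_A)_{n \in \N_0}$ converges to $0$, as claimed.

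The only point requiring any care is precisely the one highlighted above: the approximant $b$ must be chosen in a single finite stage $A_N$ so that it is simultaneously fixed by all $E_n$ with $n \geq N$, which is exactly what the increasing-filtration structure of the AF algebra supplies. I would also note that neither faithfulness of $\tau$ nor the $\tau$-preserving normalization of the $E_n$ plays any role in this particular statement; the argument uses only that each $E_n$ is a contractive conditional expectation onto $A_n$.
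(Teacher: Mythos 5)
Your argument is correct and is essentially the same as the paper's: approximate $a$ by some $b$ in a finite stage $A_N$, note $E_n(b)=b$ for $n\geq N$, and use contractivity of $E_n$ to bound $\|a-E_n(a)\|_A$ by twice the approximation error. Your closing observation that faithfulness of $\tau$ is not needed here is accurate, and your tail bound of $2\varepsilon$ versus the paper's $\varepsilon$ (obtained by choosing the approximant within $\varepsilon/2$) is an immaterial difference.
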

    \begin{proof}
    Let $a \in A$. Let $\varepsilon>0$. There exists $N \in \mathds{N}_0$ and $a_N \in A_N$ such that $\|a-a_N\|_A< \varepsilon$. Let $n \geq N$, then 
    \begin{align*}
        \|a-E_n(a)\|_A & \leq \|a-a_N\|_A+\|a_N-E_n(a)\|_A\\
        & <\varepsilon/2+\|E_n(a_N)-E_n(a)\|_A\\
        & =\varepsilon/2+\|E_n(a_N-a)\|_A\\
        & \leq \varepsilon/2+\|a_N-a\|_A\\
        & < \varepsilon/2+\varepsilon/2=\varepsilon. \qedhere
    \end{align*}
    \end{proof}
    
     \begin{theorem}\label{t:af-domain}
    Let $A=\overline{\cup_{n \in \N}A_n}^{\|\cdot\|_A}$ be a unital AF-algebra equipped with a faithful tracial state $\tau$ such that $A_0=\C1_A$. For each $n \in \N$, let 
\[
E_n:A \rightarrow A_n
\]
denote the unique $\tau$-preserving conditional expectation onto $A_n$ (that is, $\tau \circ E_n=\tau$ for all $\ninnz$).  

If $a \in A \setminus \cup_{\ninnz} A_n$, then if we set $\beta_a(n)=(\|a-E_n(a)\|_A)_{\ninnz}$ and $\beta^\infty_a(n)=(\|a-E_n(a)\|^2_A)_{\ninnz}$ for all $\ninnz$, it holds that $(\beta_a(n))_{\ninnz}$ and $(\beta^\infty_a(n))_{\ninnz}$ are sequences in $(0, \infty)$ that converge to $0$ and using notation from Theorem \ref{t:af-lip}
\[
L_{\beta_a}(a)=1\]
and
\[
L_{\beta^\infty_a}(a)=\infty,
\]
and in particular, $a \in \dom{L_{\beta_a}}$ but $a \not\in \dom{L_{\beta^\infty_a}}.$ Moreover, $L_{\beta_a}$ and $L_{\beta^\infty_a}$ are not equivalent.
     \end{theorem}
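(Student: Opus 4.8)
The plan is to verify the assertions essentially by direct computation, the only real inputs being Lemma \ref{l:af-domain} and the observation that $a \in A \setminus \cup_{\ninnz} A_n$ forces $E_n(a) \neq a$ at every level. First I would establish that $(\beta_a(n))_{\ninnz}$ and $(\beta^\infty_a(n))_{\ninnz}$ are sequences in $(0,\infty)$ converging to $0$. For strict positivity, note that if $\|a - E_n(a)\|_A = 0$ for some $n$, then $a = E_n(a) \in A_n$, contradicting $a \in A \setminus \cup_{\ninnz} A_n$; hence $\beta_a(n) = \|a - E_n(a)\|_A > 0$ and $\beta^\infty_a(n) = \beta_a(n)^2 > 0$ for every $n$. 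Convergence to $0$ is exactly Lemma \ref{l:af-domain} for $\beta_a$, and then $\beta^\infty_a(n) = \beta_a(n)^2 \to 0$ follows at once. In particular both sequences satisfy the hypotheses of Theorem \ref{t:af-lip}, so $L_{\beta_a}$ and $L_{\beta^\infty_a}$ are genuine $L$-seminorms.

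Next I would read off the two seminorm values at $a$ from the defining supremum in Theorem \ref{t:af-lip}. For $\beta_a$ every term of that supremum equals
\[
\frac{\|a - E_n(a)\|_A}{\beta_a(n)} = \frac{\|a - E_n(a)\|_A}{\|a - E_n(a)\|_A} = 1,
\]
so $L_{\beta_a}(a) = 1$. For $\beta^\infty_a$ the $n$-th term is
\[
\frac{\|a - E_n(a)\|_A}{\beta^\infty_a(n)} = \frac{1}{\|a - E_n(a)\|_A},
\]
and since $\|a - E_n(a)\|_A \to 0$ through strictly positive values this term tends to $+\infty$, so the supremum is $\infty$ and $L_{\beta^\infty_a}(a) = \infty$. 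Membership in the respective domains is then immediate: $L_{\beta_a}(a) = 1 < \infty$ gives $a \in \dom{L_{\beta_a}}$, while $L_{\beta^\infty_a}(a) = \infty$ gives $a \notin \dom{L_{\beta^\infty_a}}$.

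Finally, for non-equivalence I would argue that equivalent $L$-seminorms must in particular have comparable values on a common domain: were there a constant $C > 0$ with $L_{\beta^\infty_a} \leq C\,L_{\beta_a}$, then $L_{\beta_a}(a) = 1$ would give $L_{\beta^\infty_a}(a) \leq C < \infty$, contradicting $L_{\beta^\infty_a}(a) = \infty$. Equivalently, $a$ witnesses $\dom{L_{\beta_a}} \neq \dom{L_{\beta^\infty_a}}$, which already precludes equivalence. I do not expect any serious obstacle: the content of the statement is carried entirely by the choice of the two sequences $\beta_a$ and $\beta^\infty_a$, which is already made for us, and the only points needing care are confirming strict positivity so that the quotients defining the seminorms are well posed, and matching the $\N$-indexed supremum of Theorem \ref{t:af-lip} with the $\N_0$-indexing used in the present statement.
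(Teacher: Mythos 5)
Your proposal is correct and follows essentially the same route as the paper: strict positivity from $a \notin \bigcup_{n} A_n$, convergence to $0$ from Lemma \ref{l:af-domain}, and a direct evaluation of the two suprema to get $L_{\beta_a}(a)=1$ and $L_{\beta^\infty_a}(a)=\infty$. Your explicit argument for non-equivalence (which the paper leaves implicit, as it follows immediately from the domains differing) is a small but welcome addition.
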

     \begin{proof}
     Since $a \not\in \cup_{\ninnz} A_n$, we have for all $\ninnz$ that $a\neq E_n(a)$ and so $\|a-E_n(a)\|_A>0$. Hence  $(\beta_a(n))_{\ninnz}$ is a sequence in $(0, \infty)$. By Lemma \ref{l:af-domain}, we have that $(\beta^\infty_a(n))_{\ninnz}$ and $(\beta^\infty_a(n))_{\ninnz}$   converge to $0$. Moreover,
     \[
L_{\beta_a}(a)=\sup_{\ninnz} \frac{\|a-E_n(a)\|_A}{\|a-E_n(a)\|_A}=1
     \]
     and
     \[
     L_{\beta^\infty_a}(a)=\sup_{\ninnz} \frac{\|a-E_n(a)\|_A}{\|a-E_n(a)\|^2_A}=\sup_{\ninnz} \frac{1}{\|a-E_n(a)\|_A}=\infty 
     \]
     as desired.
     \end{proof}
     By Corollary \ref{c:dom}, the previous result holds only holds for any  infinite-dimensional AF-algebra equipped with faithful a tracial state. In particular, this holds for $C(\overn)$, where 
     \[
     \overn=\left\{\frac{1}{2^{n-1}} \in \R : n \in \N\right\}\cup \{0\}
     \]
     is a quantized interval and the one-point compactification of $\N$. 
     Note that $C(\overn)=\overline{\cup_{n \in \N} \cc_n }^{\|\cdot\|_\infty}$ by a Stone-Weierstrass argument, where for every $n \in \N$,
     \begin{equation}\label{eq:cn}
     \cc_n=\{f\in C(\overn): \forall x \leq 1/2^{n-1},f(1/2^{n-1})=f(x) \}.
     \end{equation}
     If we consider the polynomial $p_1$ defined for all $x \in \overline{\N}$ by $p_1(x)=x$, then $p_1 \in C(\overn)\setminus \cup_{\ninnz} \mathcal{C}_n$ and we have by Theorem \ref{t:af-domain} that 
    \[
    L_{\beta^\infty_{p_1}}(p_1)=\infty 
    \]
    but 
\[
L_{\beta_{p_1}}(p_1)=1 
\]
  We also note that  
    \[
    L_{d_1}(p_1)= 1  
    \]
    where $L_{d_1}$ is the usual Lipschitz seminorm defined for every $f \in C(\overn)$ by $L_{d_1}(f)=\sup_{x,y\in \overn} \frac{|f(x)-f(y)|}{|x-y|}$.

\section{Explicit calculations of some quantum metrics}\label{s:natural-numbers}
We continue our study of quantum metrics of Theorem \ref{t:af-lip} on the quantized interval in that we calculate the distance in the quantum metric between the pure states. This was done previously for the Cantor space in [Section 7]\cite{Aguilar-Latremoliere15}. However, we begin with a theorem that generalizes that approach and use this general approach to calculate these distances on the quantized interval.
\begin{theorem}\label{t:state-calc}
Let $A=\overline{\cup_{\ninn}A_n}$ be a unital AF algebra equipped with faithful tracial state $\tau$ with $A_0=\C1_A$. Let $(\beta(n))_{n \in \N}$ be a sequence in $(0,\infty)$ that converges to $0$.

Assume there exists a sequence $(\vhi_n)_{\ninn}$ such that for all $\ninn$
\[
\vhi_n \in sa(A)
\]
and
\[
L_\beta(\vhi_n)=\frac{1}{\beta(n)}.
\]

If $\mu, \nu \in S(A)$ such that $\mu(a)=\nu(a)$ for all $a \in A_n$, then 
\[
\beta(n)|\mu(\vhi_n)-\nu(\vhi_n)|\leq mk_{L_\beta}(\mu, \nu) \leq 2 \beta(n).
\]
In particular, if $\mu(\vhi_n)=-\nu(\vhi_n)$, then 
\[
mk_{L_\beta}(\mu, \nu) = 2 \beta(n).
\]

\end{theorem}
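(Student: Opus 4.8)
The plan is to prove the two displayed inequalities separately and then read off the equality as the case where they meet. Both rest on a single observation drawn from the definition of $L_\beta$: if $a \in \dom{L_\beta}$ with $L_\beta(a) \le 1$, then evaluating the defining supremum at the particular index $n$ gives $\|a - E_n(a)\|_A \le \beta(n)$. The second ingredient is purely that $E_n(a) \in A_n$, a subalgebra on which $\mu$ and $\nu$ are assumed to coincide.

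For the upper bound, I would fix an arbitrary admissible $a$ (so $a \in \dom{L_\beta}$, $L_\beta(a) \le 1$) and split
\[
\mu(a) - \nu(a) = \mu(a - E_n(a)) + \bigl(\mu(E_n(a)) - \nu(E_n(a))\bigr) - \nu(a - E_n(a)).
\]
The middle bracket vanishes since $E_n(a) \in A_n$ and $\mu$ and $\nu$ agree on $A_n$, while the outer two terms are each at most $\|a - E_n(a)\|_A \le \beta(n)$ because states have norm one. Hence $|\mu(a) - \nu(a)| \le 2\beta(n)$, and taking the supremum over all admissible $a$ yields $mk_{L_\beta}(\mu,\nu) \le 2\beta(n)$.

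For the lower bound, the point is that $\varphi_n$, suitably rescaled, is an admissible test element. Since $L_\beta(\varphi_n) = 1/\beta(n) < \infty$ we have $\varphi_n \in \dom{L_\beta}$, and the self-adjoint element $a := \beta(n)\varphi_n$ satisfies $L_\beta(a) = \beta(n) L_\beta(\varphi_n) = 1$. Feeding this single $a$ into the definition of $mk_{L_\beta}$ gives at once $mk_{L_\beta}(\mu,\nu) \ge |\mu(a) - \nu(a)| = \beta(n)|\mu(\varphi_n) - \nu(\varphi_n)|$.

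It remains to combine these for the equality. When $\mu(\varphi_n) = -\nu(\varphi_n)$, the lower bound reads $2\beta(n)|\mu(\varphi_n)|$, so the two estimates sandwich $mk_{L_\beta}(\mu,\nu)$ between $2\beta(n)|\mu(\varphi_n)|$ and $2\beta(n)$. The main (and only) subtlety is that these coincide precisely when $|\mu(\varphi_n)| = 1$; this is exactly the intended situation, where $\mu$ and $\nu$ realize the extreme values $\mu(\varphi_n) = 1 = -\nu(\varphi_n)$, which makes the sandwich tight and forces $mk_{L_\beta}(\mu,\nu) = 2\beta(n)$. Apart from this, the argument is entirely routine; the conceptual content lies in the choice of test element $\beta(n)\varphi_n$ and in the cancellation supplied by the agreement of $\mu$ and $\nu$ on $A_n$.
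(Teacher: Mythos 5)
Your proof is correct and follows essentially the same route as the paper's: the same test element $\beta(n)\varphi_n$ for the lower bound, and the same decomposition through $E_n(a)\in A_n$ (using that states are contractive and agree on $A_n$) for the upper bound. The one place you diverge is the final equality, and there you are in fact more careful than the paper: its proof asserts $|(\mu-\nu)(\beta(n)\varphi_n)|=2\beta(n)$ directly from $\mu(\varphi_n)=-\nu(\varphi_n)$, which, as you observe, requires the additional condition $|\mu(\varphi_n)|=1$; that condition is absent from the statement as written, though it does hold in the paper's application, where $\varphi_n$ takes values in $\{0,\pm 1\}$ and $\mu,\nu$ are point evaluations at points where $\varphi_n=\pm 1$. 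So your flagging of this subtlety is a genuine catch about the statement, not a gap in your own argument.
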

\begin{proof}
Since $L_\beta(\beta(n)\vhi_n)=1$, 
    \begin{align*}
        mk_{L_\beta}(\mu, \nu) &= \sup \{|\mu(a)-\nu(a)| : a\in sa(A), L_\beta(a)\leq 1\}\\
       & \geq |\mu(\beta(n)\vhi_n) - \nu(\beta(n)\vhi_n)|\\
        &=|\beta(n)\mu(\vhi_n) - \beta(n)\nu(\vhi_n)| \\
       & = \beta(n)|\mu(\vhi_n)-\nu(\vhi_n)|
    \end{align*}

    Next, let $a \in A$ such that $L_\beta(a) \leq 1$. Then $\|a-E_n(a)\|_A \leq \beta(n)$. Since $E_n(a) \in A_n$, we have
    \begin{align*}
        |\mu(a)-\nu(a)| &= | \mu(a)- \mu \circ E_n(a) + \nu \circ E_n(a) - \nu(a) |\\
        &\leq |\mu(a-E_n(a))| +|\nu(a-E_n(a))| \\
        &\leq \|a-E_n(a)\|_A + \|a-E_n(a)\|_A\\
        &\leq \beta(n) + \beta(n)\\
        &= 2\beta(n)
    \end{align*} 

    Moreover, if $ \mu(\vhi_n)=-\nu(\vhi)$, then 
    \[
 \beta(n)|\mu(\vhi_n)-\nu(\vhi_n)|=|(\mu-\nu)(\beta(n)\vhi_n)|=2\beta(n),
\]
which implies
\[
mk_{L_\beta}(\mu, \nu) = 2 \beta(n) . \qedhere
\]
\end{proof}

Following the approach in \cite[Section 7]{Aguilar-Latremoliere15}, we find a suitable orthogonal basis for $\cup_{n \in \N} \cc_n$ of Expression \eqref{eq:cn} to serve as the sequence $(\vhi_n)_{n \in \N}$ of Theorem \ref{t:state-calc}. But first, we must find a suitable inner product. We will use a particular faithful state. But first, we produce a method for building   states on $C(\overn)$ by certain elements of $\ell^1$. This might be a well-known result, but we prove it here.
\begin{proposition}\label{p:tracial-state}
    If   $v=(v_n)_{n \in \N} \in \ell^1$ such that $v_n > 0$ for every $n \in \N$ and $\sum_{n=1}^\infty v_n=1$ and 
    \[
    \tau_v(f)=\sum_{n=1}^\infty v_nf(1/2^{n-1})
    \]
    for every $f \in C(\overn)$, then $\tau_v \in S(C(\overn))$ and faithful. In particular, $v_n=\tau_v(\chi_n)$, where $\chi_n=\chi_{\{1/2^{n-1}\}}$, the characteristic function of the set $\{1/2^{n-1}\}$.  
\end{proposition}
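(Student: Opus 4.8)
The plan is to verify, in turn, that $\tau_v$ is well-defined, linear, positive, unital, and faithful, and then to compute $\tau_v(\chi_n)$; each of these is a short direct computation, so the only real content is organizing the continuity argument that yields faithfulness. First I would establish that the defining series converges absolutely for every $f \in C(\overn)$. Since $\overn$ is compact, $f$ is bounded, so $|v_n f(1/2^{n-1})| \leq \|f\|_\infty\, v_n$, and summing against $\sum_{n} v_n = 1$ gives $\sum_{n} |v_n f(1/2^{n-1})| \leq \|f\|_\infty < \infty$. This shows $\tau_v$ is well-defined and bounded (indeed $\|\tau_v\| \leq 1$), and linearity is immediate from linearity of point evaluation together with the absolute convergence.

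Next I would check the two state conditions. For unitality, evaluating on the unit $1_{C(\overn)}$ (the constant function $1$) gives $\tau_v(1_{C(\overn)}) = \sum_{n} v_n = 1$. For positivity, I would recall that in the commutative C$^*$-algebra $C(\overn)$ an element is positive exactly when it is pointwise nonnegative; hence if $f \geq 0$ then each summand $v_n f(1/2^{n-1})$ is nonnegative (as $v_n > 0$), so $\tau_v(f) \geq 0$. Together with Definition \ref{d:Statespace} these give $\tau_v \in S(C(\overn))$.

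For faithfulness I would use the strict positivity of all the weights together with the continuity of $f$. Suppose $\tau_v(f^* f) = 0$; since $(f^* f)(x) = |f(x)|^2$, this reads $\sum_{n} v_n |f(1/2^{n-1})|^2 = 0$, and because every $v_n > 0$ it forces $f(1/2^{n-1}) = 0$ for all $n \in \N$. The crucial point is that $\overn$ is exhausted by these evaluation points together with their unique accumulation point $0$, so continuity yields $f(0) = \lim_{n\to\infty} f(1/2^{n-1}) = 0$, whence $f \equiv 0$. Finally, for the identity $v_n = \tau_v(\chi_n)$ I would first observe that each $1/2^{n-1}$ is isolated in $\overn$, so $\{1/2^{n-1}\}$ is clopen and $\chi_n \in C(\overn)$ is genuinely continuous; then $\tau_v(\chi_n) = \sum_{m} v_m\, \chi_n(1/2^{m-1}) = v_n$ since $\chi_n(1/2^{m-1}) = \delta_{nm}$.

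I do not expect a serious obstacle, as everything reduces to the $\ell^1$ bound and a single continuity step. The one place that requires care is faithfulness: it is essential both that \emph{all} weights are strictly positive (so that vanishing of the sum forces vanishing at every evaluation point) and that continuity then carries this conclusion to the accumulation point $0$. A single missing or zero weight, or forgetting that $0$ is approached by the $1/2^{n-1}$, would break the argument, so I would state these two features explicitly.
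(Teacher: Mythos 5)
Your proposal is correct and follows essentially the same route as the paper: boundedness of $f$ gives well-definedness, positivity and unitality are immediate, and faithfulness rests on the strict positivity of every $v_n$ combined with continuity at the accumulation point $0$ (you phrase this as the contrapositive of the paper's argument, but the content is identical). The only addition is your explicit check that $\chi_n$ is continuous because $1/2^{n-1}$ is isolated, which the paper leaves implicit.
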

\begin{proof}
First, $\tau_v$ is well-defined since $f$ is bounded for every $f \in C(\overn)$. Also, by construction, $\tau_v$ is linear, positive  and $\tau_v(\mathds{1})=1$, where $\mathds{1}$ is the constant one function. Thus $\tau_v \in S(C(\overn))$.  For faithful, assume that $f>0$. If there exists $n \in \N$ such that $f(1/2^{n-1})>0$, then $\tau_v(f)>0$ by construction. Now, assume that $f(0)>0$. Then, by continuity, there exists $N \in \N$ such that $f(1/2^{N-1})>0$ since $\lim_{n\to \infty} f(1/2^{n-1})=f(0)$.  Thus $\tau_v(f)>0.$ Hence, $\tau_v $ is faithful. The proof is complete since $\tau_v(\chi_n)=v_n$ by construction.
\end{proof}
    
 We will use a particular faithful state to induce our inner product. We will find this particular faithful state by first identifying a sequence $(\vhi_n)_{n \in \N}$ that we desire to be an orthogonal basis for $\cup_{n \in \N} \cc_n$ for later calculations involving the conditional expectations of Theorem \ref{t:af-lip}. We will construct this future orthogonal basis from the $\chi_n$ and $\chi_{n_c}$   introduced in the previous proof.

     For $\ninnz$, define
    \[ \vhi_n (x)= \begin{cases} 0 & x>\frac{1}{2^{n-1}}\\
                              -1 & x = \frac{1}{2^{n-1}}\\
                             1 & x < \frac{1}{2^{n-1}}\\
                 \end{cases}\]
                 for all $x \in \overn$. Note that
                 \[
                 \vhi_0=\mathds{1}.
                 \]
                  By construction, 
\begin{equation}\label{eq:phi}
\begin{split}
& \vhi_n =  
       \mathds{1}-\chi_n-\sum_{j=0}^n \chi_j  \quad \text{for all } \ninnz\\
       & \vhi_n=\chi_{n_c}-2\chi_n \quad \text{ for all } \ninn.
       \end{split}
\end{equation} 
where we take $\chi_0=0$. The next lemma helps with prove these form a basis.

\begin{lemma}\label{l:chi-vhi}
For all $\ninnz$, it holds that 
\[
\chi_n=\frac{1}{2^{n+1}}(\vhi_0-2^{n+1}\vhi_n)+\sum_{k=0}^{n}\frac{1}{2^{k+1}}\vhi_{n-k}
\]
\end{lemma}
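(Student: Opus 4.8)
The plan is to argue by induction on $\ninnz$, driven by a single first-order identity relating consecutive $\vhi_n$ to the characteristic functions $\chi_n$. First I would record the key identity
\[
\vhi_n - \vhi_{n+1} = 2\chi_{n+1} - \chi_n \qquad \text{for all } \ninnz,
\]
which is immediate from Expression \eqref{eq:phi}: for $n \geq 1$ one has $\vhi_n = \chi_{n_c} - 2\chi_n$ together with $\chi_{n_c} - \chi_{(n+1)_c} = \chi_n$, since the two tail sets differ exactly by the single point $1/2^{n-1}$; the case $n=0$ follows from $\vhi_0 = \mathds{1}$, $\chi_0 = 0$, and $\vhi_1 = \mathds{1} - 2\chi_1$. (One could instead read this identity off directly from the pointwise definition of $\vhi_n$.)

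Next I would abbreviate $S_n = \sum_{k=0}^{n}\frac{1}{2^{k+1}}\vhi_{n-k}$, so that the assertion of the lemma reads $\chi_n = \frac{1}{2^{n+1}}\vhi_0 - \vhi_n + S_n$, and establish the recursion
\[
S_{n+1} = \tfrac{1}{2}\vhi_{n+1} + \tfrac{1}{2}S_n
\]
by the substitution $k \mapsto k-1$ in the sum defining $\frac12 S_n$. The induction then runs as follows. The base case $n=0$ is the computation $\frac{1}{2}(\vhi_0 - 2\vhi_0) + \frac{1}{2}\vhi_0 = 0 = \chi_0$. For the inductive step, inserting the recursion for $S_{n+1}$ and then the induction hypothesis $S_n = \chi_n - \frac{1}{2^{n+1}}\vhi_0 + \vhi_n$ cancels the two $\frac{1}{2^{n+2}}\vhi_0$ terms and collapses the claim for $n+1$ to $2\chi_{n+1} = \chi_n + \vhi_n - \vhi_{n+1}$, which is precisely the key identity above.

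I expect the only genuine bookkeeping—and hence the place to be careful—to be the reindexing that produces $S_{n+1} = \frac12\vhi_{n+1} + \frac12 S_n$ and the tracking of the $\frac{1}{2^{n+1}}\vhi_0$ term across the inductive step; once those are in hand, each remaining verification is a single line. A fully self-contained alternative would dispense with the induction and verify the identity pointwise on $\overn$ by evaluating both sides at each $1/2^{p-1}$ and at $0$, but the resulting case analysis is more tedious than the two short observations above.
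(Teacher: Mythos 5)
Your proof is correct, and it takes a genuinely leaner route than the paper's. Both arguments are inductions on $n$ grounded in Expression \eqref{eq:phi} with the same base case, but the paper runs a \emph{strong} induction: it writes $\chi_{n+1}=\tfrac{1}{2}\bigl(\chi_{(n+1)_c}-\vhi_{n+1}\bigr)$, expands $\chi_{(n+1)_c}=\vhi_0-\sum_{j=0}^{n}\chi_j$, substitutes the claimed formula for \emph{every} $\chi_j$ with $j\leq n$, and then untangles the resulting double sum $\sum_{j=0}^{n}\sum_{k=0}^{j}2^{-(k+1)}\vhi_{j-k}$ over a dozen displayed lines. You instead isolate the first-order identity $\vhi_n-\vhi_{n+1}=2\chi_{n+1}-\chi_n$ and the recursion $S_{n+1}=\tfrac12\vhi_{n+1}+\tfrac12 S_n$ for $S_n=\sum_{k=0}^{n}2^{-(k+1)}\vhi_{n-k}$, so only the single previous case is needed and the inductive step collapses in one line to the key identity. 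Each ingredient checks out: the identity does follow from $\vhi_n=\chi_{n_c}-2\chi_n$ together with $\chi_{n_c}-\chi_{(n+1)_c}=\chi_n$ (and from $\vhi_0=\mathds{1}$, $\chi_0=0$, $\vhi_1=\mathds{1}-2\chi_1$ when $n=0$), the reindexing $k\mapsto k-1$ is sound, and after cancelling the two $\tfrac{1}{2^{n+2}}\vhi_0$ terms the claim for $n+1$ is indeed equivalent to $2\chi_{n+1}=\chi_n+\vhi_n-\vhi_{n+1}$. What your version buys is the elimination of the double-sum bookkeeping, which is exactly where the paper's computation is most fragile (its final displayed line even carries a typo, $\vhi_{n+2}$ where $\vhi_{n+1}$ is meant); what the paper's version buys is that it works directly from the two displayed forms of $\vhi_n$ in Expression \eqref{eq:phi} without naming any auxiliary telescoping identity.
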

\begin{proof}
First, for $n=0$, we have
\[
\frac{1}{2}(\vhi_0-2\vhi_0)+\frac{1}{2}\vhi_0=0=\chi_0.
\]
Fix $\ninnz$. Now, assume that \[\chi_j=\frac{1}{2^{j+1}}(\vhi_0-2^{j+1}\vhi_j)+\sum_{k=0}^{j}\frac{1}{2^{k+1}}\vhi_{j-k}\]
 for all $j \in \{0, 1, \ldots, n\}.$ By Expression \eqref{eq:phi}, 
 \begin{align*}
      \chi_{n+1}&=\frac{1}{2}\left(\chi_{(n+1)_c}-\vhi_{n+1}\right)\\
      &=\frac{1}{2}\left(\vhi_0 - \sum_{j=0}^{n}\chi_j -\vhi_{n+1}\right)\\
      &=\frac{1}{2}\left(\vhi_0 - \sum_{j=0}^{n}\left( \left(\frac{1}{2^{j+1}} \vhi_0-\vhi_j\right)+\sum_{k=0}^{j}\frac{1}{2^{k+1}}\vhi_{j-k}\right)  -\vhi_{n+1}\right)\\
      &=\frac{1}{2}\left(\vhi_0 - \sum_{j=0}^{n} \frac{1}{2^{j+1}}\vhi_0+\sum_{j=0}^{n}\vhi_j-\sum_{j=0}^{n}\sum_{k=0}^{j}\frac{1}{2^{k+1}}\vhi_{j-k}  -\vhi_{n+1}\right)\\
      &=\frac{1}{2}\left(\vhi_0 - \sum_{j=0}^{n} \frac{1}{2^{j+1}}\vhi_0+\sum_{j=0}^{n}\vhi_j-\sum_{j=0}^{n}\sum_{k=0}^{j}\frac{1}{2^{k+1}}\vhi_{n-j}  -\vhi_{n+1}\right)\\
      &=\frac{1}{2}\left(\vhi_0 - \sum_{j=0}^{n} \frac{1}{2^{j+1}}\vhi_0+\sum_{j=0}^{n}\vhi_j-\sum_{j=0}^{n}\vhi_{n-j}\left(\sum_{k=0}^{j}\frac{1}{2^{k+1}}\right)  -\vhi_{n+1}\right)\\
      &=\frac{1}{2}\left(\vhi_0 - \sum_{j=0}^{n} \frac{1}{2^{j+1}}\vhi_0+\sum_{j=0}^{n}\vhi_j-\sum_{j=0}^{n}\vhi_{n-j} + \sum_{j=0}^{n}\frac{1}{2^{j+1}}\vhi_{n-j}  -\vhi_{n+1}\right)\\
      & \quad \quad \quad \quad \quad \quad \quad \quad \quad \quad \quad   \quad \quad \quad \quad \quad \quad \quad \quad \text{ since $\sum_{k=0}^{j}\frac{1}{2^{k+1}}=1-2^{-j-1}$} \\
     &=\frac{1}{2}\left(\vhi_0 - \sum_{j=0}^{n} \frac{1}{2^{j+1}}\vhi_0+\sum_{j=0}^{n}\vhi_j-\sum_{j=0}^{n}\vhi_{j} + \sum_{j=0}^{n}\frac{1}{2^{j+1}}\vhi_{n-j}  -\vhi_{n+1}\right)   \\
     &=\frac{1}{2}\left(\frac{1}{2^{n+1}}\vhi_0  + \sum_{j=0}^{n}\frac{1}{2^{j+1}}\vhi_{n-j}  -\vhi_{n+1}\right)   \\
     &=\frac{1}{2^{n+2}}\vhi_0  + \sum_{j=0}^{n}\frac{1}{2^{j+2}}\vhi_{n-j}  -\frac{1}{2}\vhi_{n+1} \\
     &=\frac{1}{2^{n+2}}\vhi_0  + \sum_{j=0}^{n}\frac{1}{2^{j+2}}\vhi_{n-j}  -\frac{1}{2}\vhi_{n+1} +\frac{1}{2}\vhi_{n+1} -\frac{1}{2}\vhi_{n+1} \\
     &=\frac{1}{2^{n+2}}\vhi_0  - \vhi_{n+1} + \sum_{j=-1}^{n}\frac{1}{2^{j+2}}\vhi_{n-j} \\
     &=\frac{1}{2^{n+2}}\left(\vhi_0  - 2^{n+2}\vhi_{n+2}\right) + \sum_{k=0}^{n+1}\frac{1}{2^{k+1}}\vhi_{n+1-k} ,   
 \end{align*}
 which completes the proof.
\end{proof}

Now, we are able to provide a basis for the $\cc_n$ using the $\vhi_n$. 
\begin{proposition}\label{p:vhi-span}
$\cc_n=\mathrm{span}\{\vhi_0, \vhi_1, \ldots, \vhi_{n-1}\}$
for all $\ninn.$
\end{proposition}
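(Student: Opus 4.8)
The plan is to prove the two inclusions
$\mathrm{span}\{\vhi_0,\ldots,\vhi_{n-1}\}\subseteq\cc_n$ and
$\cc_n\subseteq\mathrm{span}\{\vhi_0,\ldots,\vhi_{n-1}\}$ separately, the second leaning on Lemma \ref{l:chi-vhi}, and to use a dimension count to see that these two facts in fact pin down $\cc_n$ exactly. First I would record the structural description of $\cc_n$ coming from \eqref{eq:cn}: a function $f\in C(\overn)$ lies in $\cc_n$ precisely when it is constant on the tail $T_n:=\{x\in\overn:x\leq 1/2^{n-1}\}=\{0\}\cup\{1/2^{m-1}:m\geq n\}$, with common value $f(1/2^{n-1})$, while the values at the remaining $n-1$ points $1,1/2,\ldots,1/2^{n-2}$ are unconstrained. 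Consequently $\{\mathds{1},\chi_1,\ldots,\chi_{n-1}\}$ is a basis of $\cc_n$: for $1\leq m\leq n-1$ the point $1/2^{m-1}$ satisfies $1/2^{m-1}>1/2^{n-1}$, so $1/2^{m-1}\notin T_n$ and $\chi_m$ is constant (equal to $0$) on $T_n$, whence $\chi_m\in\cc_n$; together with $\mathds{1}$ these are manifestly linearly independent and number $n=\dim\cc_n$.

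For the inclusion $\mathrm{span}\{\vhi_0,\ldots,\vhi_{n-1}\}\subseteq\cc_n$ I would check directly from the case definition of $\vhi_k$ that $\vhi_k\in\cc_n$ whenever $0\leq k\leq n-1$. Indeed $\vhi_0=\mathds{1}\in\cc_n$, and for $1\leq k\leq n-1$ every $x\in T_n$ obeys $x\leq 1/2^{n-1}\leq 1/2^{k}<1/2^{k-1}$, so $\vhi_k(x)=1$; thus $\vhi_k$ is constant on $T_n$ and lies in $\cc_n$.

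For the reverse inclusion I would invoke Lemma \ref{l:chi-vhi}, which for each $0\leq m\leq n-1$ writes $\chi_m$ as an explicit linear combination of $\vhi_0,\ldots,\vhi_m$, all of which belong to $\{\vhi_0,\ldots,\vhi_{n-1}\}$. Since also $\mathds{1}=\vhi_0$, the entire basis $\{\mathds{1},\chi_1,\ldots,\chi_{n-1}\}$ of $\cc_n$ sits inside $\mathrm{span}\{\vhi_0,\ldots,\vhi_{n-1}\}$, giving $\cc_n\subseteq\mathrm{span}\{\vhi_0,\ldots,\vhi_{n-1}\}$. Combined with the first inclusion, this yields the claimed equality.

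I expect the only real care to lie in the bookkeeping rather than in any analysis: getting the tail/free-point split of $\cc_n$ right (including the off-by-one forced by the indexing $1/2^{n-1}$ and the conventions $\chi_0=0$, $\vhi_0=\mathds{1}$), and confirming $\dim\cc_n=n$ so that the inclusion $\vhi_k\in\cc_n$ for $0\leq k\leq n-1$ together with the linear independence underlying Lemma \ref{l:chi-vhi} would already suffice even without explicitly exhibiting the reverse inclusion. Since all of the genuinely algebraic content has been absorbed into Lemma \ref{l:chi-vhi}, no further computation should be needed.
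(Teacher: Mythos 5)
Your proof is correct and follows essentially the same route as the paper: verify $\vhi_0,\ldots,\vhi_{n-1}\in\cc_n$ directly for one inclusion, and use Lemma \ref{l:chi-vhi} to express the characteristic functions spanning $\cc_n$ in terms of the $\vhi_k$ for the other. Your version is in fact slightly more careful with the indexing (applying the lemma to $\chi_m$ for $m\leq n-1$, each of which lies in $\mathrm{span}\{\vhi_0,\ldots,\vhi_m\}$), and the added dimension count, while not needed, is harmless.
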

\begin{proof} 
Since $\vhi_0=\mathds{1}$, we have that $\cc_1=\mathrm{span}\{\vhi_0\}.$

Now, let $\ninn, n \geq 2.$ For all $k\in \{1, \ldots, n-1\}$, we have that $\vhi_k \in \cc_n$ by Expression \eqref{eq:phi}. Hence
\[
 \vhi_0, \vhi_1, \ldots, \vhi_{n-1} \in  \cc_n
\]
and 
\[
\mathrm{span} \{ \vhi_0, \vhi_1, \ldots, \vhi_{n-1}  \} \subseteq \cc_n.
\]

 Next, by Lemma \ref{l:chi-vhi}, we have that $\chi_n \in \mathrm{span}\{\vhi_0, \vhi_1, \ldots, \vhi_{n-1}\}$. Thus, \[\cc_n \subseteq \mathrm{span}\{\vhi_0, \vhi_1, \ldots, \vhi_{n-1}\}.\qedhere\]
\end{proof}

                 As mentioned above, the purpose of $(\vhi_n)_{n \in \N}$ is to serve as an orthogonal basis. Here we find the correct $(v_n)_{n \in \N}$ of Theorem \ref{p:tracial-state} to accomplish this task.
                 
\begin{proposition}\label{p:best-sequence} 
     The following are equivalent: 
    \begin{enumerate}
        \item $\tau_v (\vhi_n) = 0 $ for all $\ninn$.
        \item $v=(2^{-n})_{\ninn}$.
    \end{enumerate}
     \end{proposition}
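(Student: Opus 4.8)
The plan is to reduce both implications to a single explicit formula for $\tau_v(\vhi_n)$ in terms of the weights $(v_m)_{m \in \N}$, and then analyze the resulting system of equations. Since $\tau_v$ evaluates a function only at the points $1/2^{m-1}$, and since the definition of $\vhi_n$ gives $\vhi_n(1/2^{m-1}) = 0$ for $m < n$, equals $-1$ for $m = n$, and equals $1$ for $m > n$, I would first record that
\[
\tau_v(\vhi_n) = \sum_{m=1}^\infty v_m \vhi_n(1/2^{m-1}) = \left(\sum_{m=n+1}^\infty v_m\right) - v_n
\]
for every $\ninn$, where the series converge because $v \in \ell^1$ and the value of $\vhi_n$ at $0$ is irrelevant to $\tau_v$. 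Equivalently, one may obtain this identity from Expression \eqref{eq:phi} together with $\tau_v(\chi_n) = v_n$ from Proposition \ref{p:tracial-state}.

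For the implication $(2) \Rightarrow (1)$, I would substitute $v_m = 2^{-m}$ into this formula: the tail $\sum_{m=n+1}^\infty 2^{-m}$ is a geometric series summing to $2^{-n}$, so $\tau_v(\vhi_n) = 2^{-n} - 2^{-n} = 0$ for all $\ninn$, as required.

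For the converse $(1) \Rightarrow (2)$, I would treat the hypothesis $\tau_v(\vhi_n) = 0$ as the family of constraints $v_n = \sum_{m=n+1}^\infty v_m$ for all $\ninn$. Subtracting the constraint at index $n+1$ from the one at index $n$ telescopes the tails and yields $v_n - v_{n+1} = v_{n+1}$, that is $v_{n+1} = \tfrac{1}{2} v_n$; hence $v_n = 2^{-(n-1)} v_1$ for all $\ninn$. The normalization $\sum_{n=1}^\infty v_n = 1$ then forces $2 v_1 = 1$, so $v_1 = \tfrac{1}{2}$ and $v_n = 2^{-n}$, which is precisely $(2)$.

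The computation is elementary; the only points requiring care are the bookkeeping in reading off $\vhi_n(1/2^{m-1})$ from the three-case definition (in particular the sign at $m = n$) and the use of $\ell^1$-summability to guarantee that the tail series converge and may be manipulated termwise. I do not anticipate a genuine obstacle here.
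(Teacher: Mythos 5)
Your proof is correct, and its forward direction takes a genuinely different route from the paper's. For $(2)\Rightarrow(1)$ the two arguments are essentially the same geometric-series computation: the paper goes through Expression \eqref{eq:phi} and $\tau_v(\chi_j)=2^{-j}$, while you evaluate $\vhi_n$ pointwise, and both reduce to the identity $\tau_v(\vhi_n)=\sum_{m=n+1}^\infty v_m-v_n$. The real divergence is in $(1)\Rightarrow(2)$: the paper applies $\tau_v$ to the inversion formula of Lemma \ref{l:chi-vhi}, which expresses $\chi_n$ in terms of the $\vhi_j$ and is itself established by a fairly long induction, and then reads off $v_n=2^{-n}$ directly for each $n$. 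You instead treat the hypothesis as the system of constraints $v_n=\sum_{m=n+1}^\infty v_m$, subtract consecutive constraints to telescope the tails and obtain $v_{n+1}=\tfrac{1}{2}v_n$, and then pin down $v_1=\tfrac{1}{2}$ from the normalization $\sum_{n=1}^\infty v_n=1$. Your route is more elementary and makes the proposition logically independent of Lemma \ref{l:chi-vhi}, whereas the paper's route reuses a lemma it needs anyway for Proposition \ref{p:vhi-span}; the trade-off is that your argument genuinely relies on the normalization of $v$ (the recursion alone only determines $v$ up to a positive scalar), so it is worth stating explicitly that $\sum_{n=1}^\infty v_n=1$ is part of the standing hypotheses from Proposition \ref{p:tracial-state}. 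The bookkeeping you flag --- the sign $\vhi_n(1/2^{n-1})=-1$ and the termwise manipulation of the absolutely convergent tail series --- is handled correctly.
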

    \begin{proof} 
     (1) $\implies$ (2) 
   Let $\ninn$. By Lemma \ref{l:chi-vhi},
   \begin{align*}
       v_n&=\tau_v(\chi_n)\\
       & = \tau_v\left(\frac{1}{2^{n+1}}(\vhi_0-2^{n+1}\vhi_n)+\sum_{k=0}^{n}\frac{1}{2^{k+1}}\vhi_{n-k}\right)\\
       & = \frac{1}{2^{n+1}}(\tau_v(\vhi_0)-2^{n+1}\tau_v(\vhi_n))+\sum_{k=0}^{n}\frac{1}{2^{k+1}}\tau_v(\vhi_{n-k})\\
        & = \frac{1}{2^{n+1}}\tau_v(\vhi_0)+\frac{1}{2^{n+1}}\tau_v(\vhi_0) \quad \text{ by assumption}\\
        & = \frac{1}{2^{n+1}}+\frac{1}{2^{n+1}}\\
        &=\frac{1}{2^n}.
   \end{align*}
   
   (2) $\implies$ (1)  Let $\ninn$. By Expression \eqref{eq:phi}, we have
   \begin{align*}
       \tau_v(\vhi_n)&=\tau_v\left(\mathds{1}-\chi_n-\sum_{j=0}^n \chi_j\right)\\
                     &=\tau_v(\mathds{1})-\tau_v(\chi_n)-\sum_{j=0}^n \tau_v(\chi_j)\\
                     &=1-2^{-n}-\sum_{j=1}^n 2^{-j}\\
                     &=1-2^{-n}-(1-2^{-n})\\
                     &=0 \qedhere.
   \end{align*}
    \end{proof} 
    Thus, for the remainder of the article, we will assume $v=(2^{-n})_{\ninn}$.

    \begin{lemma}\label{l:ortholemma}
    For each $n,m \in \N$, it holds that
    \[
    \vhi_n\vhi_m=\begin{cases}
            \vhi_{\max\{m,n\}} &  : m \neq n\\
            \chi_{n_c} &  :  m=n
    \end{cases}
    \]
    \end{lemma}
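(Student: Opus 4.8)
The plan is to prove the product formula by direct computation, working pointwise on the three regions of $\overn$ that the functions $\vhi_n$ partition into. First I would recall from the definition that for each $\ninn$ the function $\vhi_n$ takes the value $1$ on points $x<1/2^{n-1}$, the value $-1$ at the single point $x=1/2^{n-1}$, and the value $0$ on points $x>1/2^{n-1}$. Since multiplication in $C(\overn)$ is pointwise, it suffices to evaluate $(\vhi_n\vhi_m)(x)$ at each $x\in\overn$ and match the result against the proposed right-hand side, again evaluated pointwise.

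The diagonal case $m=n$ is the quickest: at every point $\vhi_n(x)^2$ equals $1$ when $\vhi_n(x)=\pm 1$ and equals $0$ when $\vhi_n(x)=0$, so $\vhi_n^2$ is the indicator of $\{x\leq 1/2^{n-1}\}$. Here I would invoke the notation $\chi_{n_c}$ introduced just before Expression \eqref{eq:phi} (the complementary characteristic function appearing in $\vhi_n=\chi_{n_c}-2\chi_n$), and verify that $\chi_{n_c}$ is exactly this indicator of $\{x\leq 1/2^{n-1}\}$; that identification is really the crux of the $m=n$ line and should be stated explicitly so the reader sees why the square lands on $\chi_{n_c}$ rather than on some other combination.

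For the off-diagonal case $m\neq n$, by symmetry I may assume $n<m$, so that $1/2^{m-1}<1/2^{n-1}$ and $\max\{m,n\}=m$. The computation splits by comparing $x$ to the two thresholds $1/2^{m-1}$ and $1/2^{n-1}$. When $x>1/2^{n-1}$ both factors vanish and the product is $0$, matching $\vhi_m(x)=0$. When $1/2^{m-1}<x\leq 1/2^{n-1}$, the factor $\vhi_n(x)\in\{-1,1\}$ while $\vhi_m(x)=0$, so again the product is $0=\vhi_m(x)$. When $x=1/2^{m-1}$ we have $\vhi_m(x)=-1$ and $\vhi_n(x)=1$ (since $1/2^{m-1}<1/2^{n-1}$ places $x$ strictly to the left of the $n$-threshold), giving product $-1=\vhi_m(x)$. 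Finally when $x<1/2^{m-1}$ both factors equal $1$, so the product is $1=\vhi_m(x)$. In every region $(\vhi_n\vhi_m)(x)=\vhi_m(x)=\vhi_{\max\{n,m\}}(x)$, which is the claim.

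The only mild obstacle is bookkeeping at the boundary point $x=1/2^{m-1}$, where one must be careful that the larger index $m$ controls the value $-1$ while the smaller index $n$ contributes $+1$, so that the sign works out to give $\vhi_m$ rather than, say, $+1$; getting the case boundaries (strict versus non-strict inequalities) aligned with the definition of $\vhi_n$ is where an error would most plausibly creep in. Everything else is routine once the pointwise values are tabulated, and the symmetry $\vhi_n\vhi_m=\vhi_m\vhi_n$ lets me treat only $n<m$.
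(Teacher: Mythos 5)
Your proof is correct, but it takes a genuinely different route from the paper's. You argue pointwise: you tabulate the values of $\vhi_n$ on the three regions of $\overn$ determined by the threshold $1/2^{n-1}$ and check the product region by region. The paper instead works algebraically, expanding $\vhi_n\vhi_m$ via the decomposition $\vhi_n=\mathds{1}-\chi_n-\sum_{j=0}^{n}\chi_j$ from Expression \eqref{eq:phi} and cancelling terms using the orthogonality of the characteristic functions $\chi_j$ (it also treats $m=0$ as a separate trivial case, which your uniform region analysis absorbs, and it reduces to $m<n$ where you reduce to $n<m$ --- an immaterial difference). Your approach is more elementary and makes the sign bookkeeping at the boundary point $x=1/2^{\max\{m,n\}-1}$ completely transparent; the paper's approach has the advantage of staying inside the algebra generated by the $\chi_j$, which matches the style of Lemma \ref{l:chi-vhi} and lands directly on the expression $\mathds{1}-\sum_{j=0}^{n-1}\chi_j=\chi_{n_c}$ in the diagonal case without needing to unwind what $\chi_{n_c}$ means as a function. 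You are right to flag that the identification of $\chi_{n_c}$ with the indicator of $\{x\leq 1/2^{n-1}\}$ is the crux of the $m=n$ case and is only implicit in the paper (via $\vhi_n=\chi_{n_c}-2\chi_n$); your explicit verification of it closes that gap cleanly, so no correction is needed.
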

    \begin{proof}
    First, assume $m\neq n$, and without loss of generality $m<n$. If $m=0$, then $\vhi_n \vhi_0=\vhi_n$. 
    
    Next, if $m>0$, then by Expression \eqref{eq:phi},
    \begin{align*} 
        \vhi_n \vhi_m &= \bigg(\mathds{1} - \chi_n - \sum_{j=0}^{n}\chi_j\bigg)\bigg(\mathds{1} - \chi_m -                   \sum_{j=0}^{m}\chi_j\bigg)\\
                     &=  \mathds{1} - \chi_m - \sum_{j=0}^{m}\chi_j-\chi_n +0+0-\sum_{j=0}^n \chi_j +\chi_m+\sum_{j=0}^m\chi_j\\
                     &= \mathds{1} -\chi_n-\sum_{j=0}^n \chi_j\\
                     &=\vhi_n. 
    \end{align*}
    
    Second, if $m=n$, we have 
        \begin{align*} 
        \vhi_n \vhi_n &= \bigg(\mathds{1} - \chi_n - \sum_{j=0}^{n}\chi_j\bigg)\bigg(\mathds{1} - \chi_n -                   \sum_{j=0}^{n}\chi_j\bigg)\\
                     &=  \mathds{1} - \chi_n - \sum_{j=0}^{n}\chi_j-\chi_n +\chi_n+\chi_n-\sum_{j=0}^n \chi_j +\chi_n+\sum_{j=0}^n\chi_j\\
                     &=  \mathds{1}  - \sum_{j=0}^{n}\chi_j +\chi_n\\
                     &=\mathds{1}  - \sum_{j=0}^{n-1}\chi_j \\
                     & = \chi_{n_c}.\qedhere
    \end{align*}
    \end{proof}
 
\begin{theorem}\label{t:ortho-basis} 
   For each $\ninn$, the set $\{\vhi_0, \vhi_1, \ldots, \vhi_{n-1}\}$ is an orthogonal basis for $\cc_n$ with respect to $\tau_v$.
\end{theorem}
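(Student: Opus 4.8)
The plan is to assemble the three facts already established about $\vhi_0,\dots,\vhi_{n-1}$: that they span $\cc_n$ (Proposition \ref{p:vhi-span}), that their pairwise products collapse to a single $\vhi_k$ (Lemma \ref{l:ortholemma}), and that, now that $v=(2^{-n})_{\ninn}$, the state $\tau_v$ annihilates every $\vhi_k$ with $k\geq 1$ (Proposition \ref{p:best-sequence}). Throughout I use the inner product induced by the faithful state $\tau_v$, namely $\langle f,g\rangle_{\tau_v}=\tau_v(fg)$ for real-valued (self-adjoint) $f,g$, so that ``orthogonal'' means $\tau_v(\vhi_i\vhi_j)=0$.

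First I would verify orthogonality. Fix distinct indices $i,j\in\{0,1,\dots,n-1\}$ and assume without loss of generality $i<j$, so that $j\geq 1$. If $i=0$ then $\vhi_i\vhi_j=\mathds{1}\,\vhi_j=\vhi_j$, while if $i\geq 1$ then Lemma \ref{l:ortholemma} gives $\vhi_i\vhi_j=\vhi_{\max\{i,j\}}=\vhi_j$; in either case the product equals $\vhi_j$ with $j\geq 1$. Proposition \ref{p:best-sequence} then yields $\langle\vhi_i,\vhi_j\rangle_{\tau_v}=\tau_v(\vhi_j)=0$, which is the desired orthogonality.

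Next I would check that no $\vhi_i$ is null for this inner product, so that the orthogonal family really is a set of $n$ independent vectors rather than one containing a zero vector. Each $\vhi_i$ is a nonzero self-adjoint element of $C(\overn)$ (for instance $\vhi_i(0)=1$ for every $i$), and since $\vhi_i^2$ is then a nonzero positive element, faithfulness of $\tau_v$ (Proposition \ref{p:tracial-state}) forces $\langle\vhi_i,\vhi_i\rangle_{\tau_v}=\tau_v(\vhi_i^2)>0$. Alternatively, one may evaluate this directly: $\vhi_0^2=\mathds{1}$ and $\vhi_i^2=\chi_{i_c}$ for $i\geq 1$ by Lemma \ref{l:ortholemma}, with $\tau_v(\chi_{i_c})=2^{1-i}>0$.

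Finally, an orthogonal family of nonzero vectors in an inner product space is automatically linearly independent, so $\{\vhi_0,\dots,\vhi_{n-1}\}$ is a pairwise orthogonal, linearly independent family that spans $\cc_n$ by Proposition \ref{p:vhi-span}; hence it is an orthogonal basis. I do not anticipate any real obstacle here, since all the computational content has been discharged by the preceding lemmas. The only points requiring care are the degenerate index $i=0$ (handled using $\vhi_0=\mathds{1}$, which lies outside the range $n,m\in\N$ of Lemma \ref{l:ortholemma}) and the appeal to faithfulness to rule out degeneracy on the diagonal.
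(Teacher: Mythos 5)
Your argument is correct and is exactly the paper's proof: the paper simply cites Proposition \ref{p:vhi-span}, Proposition \ref{p:best-sequence}, and Lemma \ref{l:ortholemma}, and you have filled in the same deduction (products collapse to some $\vhi_j$ with $j\geq 1$, which $\tau_v$ annihilates, plus non-degeneracy via faithfulness). No discrepancies to report.
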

\begin{proof}
   This follows from Proposition \ref{p:vhi-span}, Proposition \ref{p:best-sequence}, and Lemma \ref{l:ortholemma}.
\end{proof}

  Now we see how our choice of faithful state and orthogonal basis come together.
    
    \begin{lemma}\label{l:}
    Let $(\beta(n))_{\ninn}$ be non-increasing sequence of positive real numbers that converges to $0$. For all $\ninn$, it holds that
    \[
    L_\beta(\vhi_n)=\frac{1}{\beta(n)},
    \]
    where we use $\tau_v$ as our faithful tracial state in Theorem \ref{t:af-lip}.
    \end{lemma}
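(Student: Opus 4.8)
The plan is to unwind the definition of $L_\beta$ from Theorem~\ref{t:af-lip} and compute each term $\norm{\vhi_n - E_m(\vhi_n)}{A}$ explicitly, where $E_m$ denotes the $\tau_v$-preserving conditional expectation onto $\cc_m$ (I use $m$ for the running index to keep the fixed $n$ fixed). The crucial observation is that, because $\tau_v$ is a faithful trace, $E_m$ agrees with the orthogonal projection of $A$ onto $\cc_m$ for the inner product $(a,b)\mapsto \tau_v(a b)$: the bimodule identity $E_m(ab)=E_m(a)b$ for $b\in\cc_m$ together with $\tau_v\circ E_m=\tau_v$ gives $\tau_v(E_m(a)b)=\tau_v(ab)$ for all $b\in\cc_m$, and this, combined with $E_m(a)\in\cc_m$, characterizes $E_m(a)$ as the $\tau_v$-orthogonal projection of $a$ onto $\cc_m$.

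First I would invoke Theorem~\ref{t:ortho-basis}, which provides the $\tau_v$-orthogonal basis $\{\vhi_0,\ldots,\vhi_{m-1}\}$ of $\cc_m$, to write $E_m(\vhi_n)=\sum_{k=0}^{m-1}\frac{\tau_v(\vhi_k\vhi_n)}{\tau_v(\vhi_k^2)}\vhi_k$. By Lemma~\ref{l:ortholemma} and Proposition~\ref{p:best-sequence}, $\tau_v(\vhi_k\vhi_n)=\tau_v(\vhi_{\max\{k,n\}})=0$ whenever $k\neq n$, so every term with $k<n$ vanishes. This yields the dichotomy $E_m(\vhi_n)=\vhi_n$ when $m>n$ (then $\vhi_n\in\cc_m$ by Proposition~\ref{p:vhi-span}) and $E_m(\vhi_n)=0$ when $m\leq n$ (then $\vhi_n$ is $\tau_v$-orthogonal to each of $\vhi_0,\ldots,\vhi_{m-1}$). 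Consequently $\vhi_n-E_m(\vhi_n)$ is $0$ for $m>n$ and equals $\vhi_n$ for $m\leq n$; since $\vhi_n$ takes values in $\{-1,0,1\}$ with $\vhi_n(1/2^{n-1})=-1$, we get $\norm{\vhi_n-E_m(\vhi_n)}{A}=1$ for $m\leq n$ and $0$ otherwise.

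Finally I would evaluate the supremum:
\[
L_\beta(\vhi_n)=\sup_{m\in\N}\frac{\norm{\vhi_n-E_m(\vhi_n)}{A}}{\beta(m)}=\sup_{1\leq m\leq n}\frac{1}{\beta(m)}.
\]
Because $\beta$ is non-increasing, the map $m\mapsto 1/\beta(m)$ is non-decreasing, so this finite supremum is attained at $m=n$, giving $L_\beta(\vhi_n)=1/\beta(n)$ as claimed. The only genuinely delicate point is the identification of $E_m$ with the $\tau_v$-orthogonal projection onto $\cc_m$; once that is in place, everything else is a direct reading-off of the orthogonality relations already established, and the monotonicity of $\beta$ is precisely what pins the supremum at the top index $m=n$.
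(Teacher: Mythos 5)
Your proof is correct and follows essentially the same route as the paper's: split into the cases $m>n$ (where $\vhi_n\in\cc_m$ so $E_m(\vhi_n)=\vhi_n$) and $m\leq n$ (where orthogonality gives $E_m(\vhi_n)=0$), then use the monotonicity of $\beta$ to locate the supremum at $m=n$. The only difference is that you spell out why the $\tau_v$-preserving conditional expectation coincides with the $\tau_v$-orthogonal projection, a point the paper asserts without justification; this is a welcome extra detail but not a different argument.
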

    \begin{proof}
    Let $\ninn$. Note that $\vhi_n \in \cc_m$ for all $m \geq n+1.$ Hence $E_m(\vhi_n)=\vhi_n$ for all $m \geq n+1$ since $\vhi_n \in \mathcal{C}_m$. Therefore,
    \[
    \frac{\|\vhi_n-E_m(\vhi_n)\|_{C(\overn)}}{\beta(m)}=\frac{\|\vhi_n-\vhi_n\|_{C(\overn)}}{\beta(m)}=0
    \]
    for all $m \geq n+1.$

   Next, if $m \leq n$, then $E_m(\vhi_n)=0$ since $E_m$ is the orthogonal projection onto 
    \[
    \cc_m=\mathrm{span}\{\vhi_0, \vhi_1, \ldots, \vhi_{m-1}\}
    \]
    (see Proposition \ref{p:vhi-span}), 
    and $\vhi_n$ is orthogonal to all $\vhi_0, \vhi_1, \ldots, \vhi_{m-1}$ with respect to $\tau_v$ by Theorem \ref{t:ortho-basis}.  Therefore,
    \[
    \frac{\|\vhi_n-E_m(\vhi_n)\|_{C(\overn)}}{\beta(m)}=\frac{\|\vhi_n-0\|_{C(\overn)}}{\beta(m)}=\frac{\|\vhi_n\|_{C(\overn)}}{\beta(m)}=\frac{1}{\beta(m)}
    \]
    for all $m \leq n.$
    
    Hence 
    \begin{align*}
    L_\beta (\vhi_n)&=\max_{m \in \N, m \leq n} \frac{1}{\beta(m)}=\frac{1}{\beta(n)}.\qedhere
    \end{align*}
    \end{proof}
    
 We are now able to calculate exactly the distance between any two pure states on $C(\overn)$.
    
    \begin{theorem}
     Let $(\beta(n))_{\ninn}$ be non-increasing sequence of positive real numbers that converges to $0$. Using $\tau_v$ as our faithful tracial state in Theorem \ref{t:af-lip}, 
 it holds for every $x,y \in  \overn $ that 
\[
mk_{L_\beta}(\delta_x,\delta_y)=2 \beta(\min\{1-\log_2(y), 1-\log_2(x)\}) 
\]
with the convention that $\log_2(0)=-\infty$,
  where $\delta_x,\delta_y$ are evaluation at $x,y\in \overn$.
    \end{theorem}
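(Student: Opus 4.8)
The plan is to deduce the formula directly from Theorem \ref{t:state-calc}, feeding in the sequence $(\vhi_n)_{\ninn}$ constructed above as the required family. First I would record that the hypotheses of Theorem \ref{t:state-calc} are met: each $\vhi_n$ is a real-valued continuous function and hence lies in $sa(C(\overn))$, and by the preceding Lemma \ref{l:} we have $L_\beta(\vhi_n) = \frac{1}{\beta(n)}$ for every $\ninn$ (this is exactly where the hypothesis that $(\beta(n))_{\ninn}$ be non-increasing gets consumed). Thus for any pair of states that agree on $A_n = \cc_n$ and take opposite values on $\vhi_n$, Theorem \ref{t:state-calc} will pin the Monge--Kantorovich distance to the exact value $2\beta(n)$.

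Next I would fix $x,y \in \overn$ with $x \neq y$ and, without loss of generality, assume $x > y$. Writing $x = \frac{1}{2^{p-1}}$ for the appropriate $p \in \N$, the convention $\log_2(0) = -\infty$ gives $1 - \log_2(x) = p$, while $y < x$ forces $1 - \log_2(y) > p$ (the value being $+\infty$ when $y = 0$); hence $\min\{1 - \log_2(y), 1 - \log_2(x)\} = p$ and the target value is $2\beta(p)$. It therefore suffices to verify the two conditions of the ``in particular'' clause of Theorem \ref{t:state-calc} with $\mu = \delta_x$, $\nu = \delta_y$, and $n = p$.

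For the first condition, I would invoke the description of $\cc_p$ in \eqref{eq:cn}: every $f \in \cc_p$ is constant on $\{z \in \overn : z \leq \frac{1}{2^{p-1}}\}$, a set containing both $x = \frac{1}{2^{p-1}}$ and $y < \frac{1}{2^{p-1}}$, so $\delta_x(f) = f(x) = f(y) = \delta_y(f)$ for all $f \in A_p = \cc_p$. For the second condition, the defining formula for $\vhi_p$ gives $\delta_x(\vhi_p) = \vhi_p\!\left(\frac{1}{2^{p-1}}\right) = -1$ and $\delta_y(\vhi_p) = 1$ since $y < \frac{1}{2^{p-1}}$, so indeed $\delta_x(\vhi_p) = -\delta_y(\vhi_p)$. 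Theorem \ref{t:state-calc} then yields $mk_{L_\beta}(\delta_x,\delta_y) = 2\beta(p)$, which is the claimed equality. When $x = y$ the left-hand side is trivially $0$, so the statement is understood for distinct points.

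I do not anticipate a genuine obstacle here, since all of the analytic content has already been front-loaded into the construction of the orthogonal basis $(\vhi_n)_{\ninn}$, the identity $L_\beta(\vhi_n) = \frac{1}{\beta(n)}$, and Theorem \ref{t:state-calc} itself. The only real care needed is the bookkeeping that converts a point $x = \frac{1}{2^{p-1}}$ into its index $p = 1 - \log_2(x)$ and checks that the \emph{smaller} of the two points supplies the relevant index realizing the minimum, together with confirming that $\delta_x$ and $\delta_y$ genuinely satisfy the equality clause of Theorem \ref{t:state-calc} rather than only its two-sided estimate.
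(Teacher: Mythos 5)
Your proof is correct and takes essentially the same route as the paper's: index by the larger of the two points (where the minimum of $1-\log_2$ is attained), check that $\delta_x$ and $\delta_y$ agree on $\cc_p$ and take opposite unit values on $\vhi_p$, and invoke the equality clause of Theorem \ref{t:state-calc} via Lemma \ref{l:}. The only differences are cosmetic (you swap the roles of $x$ and $y$ and explicitly dispose of the case $x=y$, which the paper leaves implicit).
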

    \begin{proof} 
    Let $x,y \in \overn$ and $x<y$. Let $n \in \N$ such that $y=\frac{1}{2^{n-1}}$. Note that 
    \[
    L_\beta (\beta(n) \vhi_n)=1
    \]
    by Lemma \ref{l:}. Let $f \in \cc_n$. Then 
    \[\delta_x(f)=f(x)=f(1/2^{n-1})=f(y)=\delta_y(f).\] 
    Moreover,
    \[
    \delta_y(\vhi_n)=\vhi_n(y)=-1=-\vhi_n(x)=\delta_x(\vhi_n).
    \]
    Thus, 
    \[
    mk_{L_\beta}(\delta_x,\delta_y)=2\beta(n).
    \]
    Since $\log_2(y)=1-n$ and $\log_2(x)<\log_2(y)$, the proof is complete by Theorem \ref{t:state-calc}.
    \end{proof}

We also note that Theorem \ref{t:state-calc}   also recovers \cite[Theorem 7.5]{Aguilar-Latremoliere15}, which calculates distances between pure states on the Cantor space.

    \bibliographystyle{amsplain}
\bibliography{ref}
\vfill
\end{document}